\documentclass[11pt,a4paper,english,reqno]{amsart}
\usepackage[a4paper,footskip=1.5em]{geometry}
\usepackage{amsmath,amssymb,amsthm,mathtools}

\usepackage{hyperref}
\hypersetup{colorlinks=true,linkcolor=blue,citecolor=blue,pdfpagemode=UseNone,pdfstartview={XYZ null null 1.00}}

\pagestyle{plain}
\linespread{1.19}
\setlength{\parskip}{3pt}

\theoremstyle{plain}
\newtheorem*{theorem*}{Theorem}
\newtheorem{theorem}{Theorem}[section]
\newtheorem{lemma}[theorem]{Lemma}

\newtheorem{proposition}[theorem]{Proposition}
\newtheorem*{claim*}{Claim}

\newtheorem{conjecture}[theorem]{Conjecture}

\theoremstyle{remark}

\let\originalleft\left
\let\originalright\right
\renewcommand{\left}{\mathopen{}\mathclose\bgroup\originalleft}
\renewcommand{\right}{\aftergroup\egroup\originalright}

\begin{document}

\title{A note on some conjectures about combinatorial models for RNA secondary structures}

\author{Adam Zsolt Wagner}
\thanks{Department of Mathematics, ETH, Z\"urich, Switzerland. Email:
\href{mailto:zsolt.wagner@math.ethz.ch} {\nolinkurl{zsolt.wagner@math.ethz.ch}}.\\
Research has been performed while the author was a PhD student at the University of Illinois at Urbana-Champaign}

\maketitle

\begin{abstract}

We resolve two conjectures of Black-Drellich-Tymoczko about the numbers of valid plane trees for given primary sequences.

\end{abstract}

\section{Introduction}

Recall some definitions from \cite{black,heitsch}: a set $A$ is a {\it complementary alphabet of size} $m$ if it consists of $m$ pairs of complementary letters $\{A,\bar{A}, B, \bar{B}, \ldots\}$. We refer to $\bar{A}$ as the \emph{complement} of $A$, and vice versa $A$ is the complement of $\bar{A}$. An example of such an alphabet is the set of the four nucleotides ${A,C,G,U}$ with the complementary pairs being the Watson--Crick complements $A-U$ and $C-G$.

Let $T$ be a rooted plane tree with $n$ edges and let $P =p_{1}p_{2}\ldots p_{2n}$ be a word in a complementary alphabet $\mathcal{A}$. Label the edges of $T$ by $P$ according to a counter-clockwise walk around the boundary of $T$, starting at the root, so that every edge receives two letters (one letter going 'down' and one letter going `up'). We say $T$ is $P$-valid if every edge in $T$ receives complementary letters by this method. As an example, consider the word $P=AA\bar{A}B\bar{B}\bar{A}\bar{B}B$.

\begin{center}
\includegraphics[width=65.70mm]{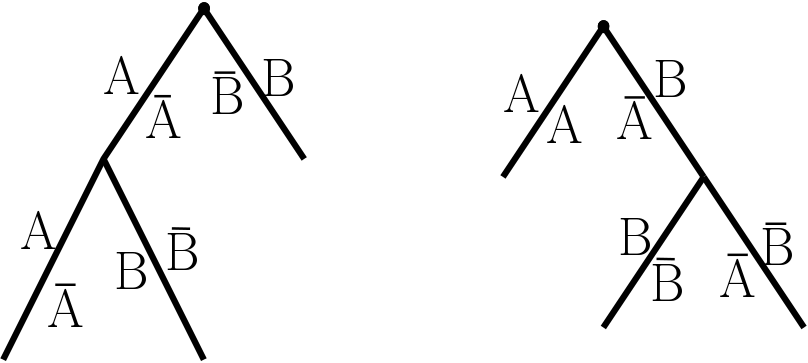}
\end{center}

The tree on the left in $P$-valid, as every edge received complementary pairs of letters. The tree on the right is not $P$-valid, as three out of its four edges do not receive complementary letters.

Let $\mathcal{P}(n, m)$ be the set of words $P$ of length $2n$ in a complementary alphabet $\mathcal{A}$ of size $m,$ with at least one $P$-valid plane tree. Let $V(P)$ be the set of $P$-valid plane trees for a fixed word $P$. Let $N(n,m,k)$ be the set of words $P\in \mathcal{P}(n, m)$ such that $|V(P)| =k$. Finally, let $R(n, m)$ be the set of $k\in \mathbb{N}^{+}$ such that $N(n, m, k)$ is non-empty.
The following two conjectures were stated in \cite{black}:

\begin{conjecture}\label{conj:1}
 $R(n, m)=R(n, 1)$ for all $m$. That is, the set $R(n, m)$ depends only on $m.$
\end{conjecture}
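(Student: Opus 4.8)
The plan is to first recast $|V(P)|$ combinatorially. Reading the edges of a plane tree with $n$ edges along the counter-clockwise contour pairs the two visits to each edge, which identifies plane trees with $n$ edges with non-crossing perfect matchings of $\{1,\dots,2n\}$; under this identification $T$ is $P$-valid exactly when every matched pair $\{i,j\}$ carries complementary letters $p_i=\bar p_j$. Thus $|V(P)|$ is the number of non-crossing perfect matchings of $\{1,\dots,2n\}$ whose matched positions carry complementary letters, and it depends only on which (necessarily opposite-parity) pairs of positions carry complementary letters. The inclusion $R(n,1)\subseteq R(n,m)$ is then immediate: a word over $\{A,\bar A\}\subseteq\mathcal{A}$ may be regarded as a word over $\mathcal{A}$ without changing any complementarity relation among the letters that occur, hence without changing $|V(P)|$. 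The real content is $R(n,m)\subseteq R(n,1)$, and it is genuine: $R(n,m)$ is a proper subset of $\{1,\dots,C_n\}$, where $C_n$ is the $n$-th Catalan number (the maximum $C_n$ being attained exactly by single-colour alternating words), and for example $R(3,m)=\{1,2,5\}$ for every $m$, so one cannot argue simply by exhibiting all values.

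For $R(n,m)\subseteq R(n,1)$ I would induct on the number $r$ of complementary pairs occurring in $P$, the case $r\le1$ being vacuous. Fix $P$ with $|V(P)|=k\ge1$ using $r\ge2$ pairs, and single out one of them, say $\{A_r,\bar A_r\}$; since $k\ge1$ it occurs equally often barred and unbarred, at positions $I=\{i_1<\dots<i_{2t}\}$, and in every valid matching the positions of $I$ must be matched among themselves, by a non-crossing matching $\mu$ of the length-$2t$ subword $P^{(r)}=p_{i_1}\cdots p_{i_{2t}}$ that is itself valid. Conversely, for a fixed valid $\mu$ the arcs of $\mu$ cut $\{1,\dots,2n\}\setminus I$ into blocks which must each be matched internally, so $|V(P)|=\sum_{\mu\in V(P^{(r)})}\prod_g|V(P_g)|$, where $g$ ranges over the blocks determined by $\mu$ and $P_g$ is the subword of $P$ on block $g$. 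Each $P_g$ uses at most $r-1$ complementary pairs, so by induction $|V(P_g)|\in R(\,\cdot\,,1)$; hence $k$ is a prescribed sum of products of numbers each realised by a binary word of a prescribed (smaller) length.

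The remaining step -- and, I expect, the main obstacle -- is a \emph{combination lemma}: that any such structured sum of products of $R(\,\cdot\,,1)$-values is again an $R(n,1)$-value. Merging colours naively is not allowed, since recolouring $A_r\mapsto A_1$, $\bar A_r\mapsto\bar A_1$ makes formerly forbidden ``mixed'' pairs complementary and can only enlarge $V(P)$. Instead I would build the witnessing binary word recursively through the first-return recursion $|V(Q)|=\sum_j|V(Q_{[2,j-1]})|\cdot|V(Q_{[j+1,2N]})|$ for a word $Q$ of length $2N$, the sum over even $j$ with $q_j=\bar q_1$: one designates the positions carrying $\bar A$ at even indices to play the role of $\mu$, recursively installs in the nested intervals binary words realising the required factors, and fills the remaining room with single-colour alternating blocks -- which have maximal count $C_\ell$ and are rigid enough that inserting them creates no new returns. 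The delicate points are that the nested prefixes $Q_{[2,j-1]}$ for different $j$ are not independent, so the factors must be installed compatibly, and that one must certify no extra non-crossing matching arises; I would control both using rigidity facts such as ``$|V(Q)|=C_\ell$ forces $Q$ to be single-colour alternating''. Everything else -- the reformulation, the easy inclusion, the colour factorisation, and the first-return recursion -- is routine; the combination lemma is where the work lies.
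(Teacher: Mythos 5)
This statement is a \emph{conjecture} that the paper refutes, not a theorem it proves: the paper exhibits an explicit counterexample showing $R(7,2)\neq R(7,1)$, so your attempt to prove the inclusion $R(n,m)\subseteq R(n,1)$ cannot succeed. Your reduction up to that point is sound and in fact mirrors the paper's computation: the easy inclusion $R(n,1)\subseteq R(n,m)$ is correct, and the colour-factorisation identity $|V(P)|=\sum_{\mu\in V(P^{(r)})}\prod_g |V(P_g)|$ is exactly how one evaluates the counterexample. But the ``combination lemma'' you single out as the main obstacle is precisely the step that is false: such a structured sum of products of $R(\cdot,1)$-values need not lie in $R(n,1)$.

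Concretely, take $P=B\bar{B}\bar{A}AA\bar{A}\bar{A}AB\bar{B}A\bar{A}A\bar{A}$ over the alphabet $\{A,\bar A,B,\bar B\}$, so $n=7$, $m=2$. The $B$-subword $B\bar BB\bar B$ has two valid noncrossing matchings: matching the adjacent $B\bar B$ pairs leaves a single $A$-region contributing $7$ valid matchings, while nesting the $B$-pairs cuts the $A$-positions into two blocks contributing $2\cdot 2=4$. Hence $|V(P)|=7+4=11$, and each factor ($7$, $2$, $2$) is indeed realised by a one-colour word of the appropriate length, exactly as your induction provides. Yet $11\notin R(7,1)$ (this is read off from the exhaustive enumeration in Figure 9 of Black--Drellich--Tymoczko), so $11\in R(7,2)\setminus R(7,1)$. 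The structural reason your proposed recursive construction cannot work is the one you yourself half-identify: a single one-colour word of length $14$ realising the first-return decomposition would have to produce the summands $7$ and $4$ from nested/concatenated one-colour pieces, and the rigidity of one-colour words is too restrictive to hit $11$. So rather than trying to repair the combination lemma, the correct resolution of this statement is the counterexample above.
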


We will show that Conjecture 1.1 is false, by finding a counterexample for $n=7.$

\begin{conjecture}\label{conj:2}
{\it For all} $k\in \mathbb{N}^{+}$ {\it there exists} $n, m$ {\it such that} $k\in R(n,m)$ .
\end{conjecture}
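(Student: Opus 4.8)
The plan is to prove Conjecture~\ref{conj:2} in the strong form that every $k\in\N^{+}$ lies in $R(n,1)$ for a suitable $n$; in particular one may always take $m=1$. The case $k=1$ is immediate, since the one-edge tree is the unique plane tree with one edge and is $A\bar A$-valid, so $1\in R(1,1)$. For $k\ge 2$ everything rests on the following claim, which I would isolate as a lemma and apply with $a=k-1$: over a complementary alphabet $\{A,\bar A\}$ of size $1$, the word
\[
W_{a}\;=\;A^{a}\,\bar A^{a}\,A^{a}\,\bar A^{a}\qquad(a\ge 1)
\]
satisfies $|V(W_{a})|=a+1$, so that $k\in R\big(2(k-1),1\big)$.

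To prove the claim I would work with the usual encoding: a $P$-valid plane tree with $n$ edges is the same datum as a non-crossing perfect matching of the $2n$ boundary-walk positions in which every arc joins two positions carrying complementary letters --- the two positions matched to an edge being precisely its ``down'' and ``up'' visits. Since $\{A,\bar A\}$ has a single complementary pair, a $W_{a}$-valid tree is just a non-crossing perfect matching of $\{1,\dots,4a\}$ in which every arc joins an $A$-position to an $\bar A$-position. Split the positions into the four consecutive blocks $B_{1}=A^{a}$, $B_{2}=\bar A^{a}$, $B_{3}=A^{a}$, $B_{4}=\bar A^{a}$, and for a valid matching $M$ let $t=t(M)\in\{0,1,\dots,a\}$ be the number of arcs of $M$ that run from $B_{1}$ to $B_{4}$.

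The core step is to show that $M\mapsto t(M)$ is a bijection $V(W_{a})\to\{0,1,\dots,a\}$. Surjectivity is easy: for each $t$ let $M_{t}$ consist of the nested rainbow $\{(i,4a+1-i):1\le i\le t\}$ of $B_{1}$--$B_{4}$ arcs, together with the three further nested rainbows joining the rightmost $a-t$ vertices of $B_{1}$ to the leftmost $a-t$ of $B_{2}$, the rightmost $t$ vertices of $B_{2}$ to the leftmost $t$ of $B_{3}$, and the rightmost $a-t$ vertices of $B_{3}$ to the leftmost $a-t$ of $B_{4}$; these three rainbows lie on pairwise disjoint stretches of positions, all contained in the innermost $B_{1}$--$B_{4}$ arc, so $M_{t}$ is non-crossing and complementary, with $t(M_{t})=t$. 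For injectivity I would run a forcing argument whose only real ingredient is the elementary observation that \emph{a maximal run of equal letters can never lie strictly inside an arc of a valid matching} (the run is monochromatic, so all its positions would need partners inside the arc, where no letter of the opposite type is available). With this: (i) if some $B_{1}$-vertex going to $B_{4}$ were to the right of some $B_{1}$-vertex going to $B_{2}$, those two arcs would cross, so the $B_{1}$-endpoints of the $B_{1}$--$B_{4}$ arcs are exactly $1,\dots,t$; (ii) the no-trapping observation pins their $B_{4}$-endpoints to $4a,4a-1,\dots,4a-t+1$ and forces them to nest; (iii) the remaining $B_{1}$-vertices all go to $B_{2}$, and the same argument forces them onto the leftmost $a-t$ vertices of $B_{2}$, nested; (iv) the still-unmatched positions then form the contiguous word $\bar A^{t}A^{a}\bar A^{a-t}$, and a one-line induction (at each stage the leftmost letter has a unique feasible partner) shows this word has exactly one valid matching. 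Hence $M=M_{t}$, and $|V(W_{a})|=a+1$.

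The step that needs care is injectivity, i.e.\ ruling out ``unexpected'' valid matchings for a fixed $t$: the bound $|V(W_{a})|\ge a+1$ is handed to us by the explicit family $\{M_{t}\}$, whereas the reverse bound is exactly the forcing argument above, which must be carried out carefully so that no arc configuration between non-adjacent blocks is overlooked. Everything there reduces to iterating the no-trapping observation and the trivial fact that pairwise non-crossing arcs between two disjoint position-intervals form a unique nested rainbow once their endpoint sets are fixed. As a cross-check, one can also verify $|V(W_{a})|=a+1$ by induction on $a$ using the splitting recursion $|V(w)|=\sum_{j:\,w_{j}=\overline{w_{1}}}|V(w_{2}\cdots w_{j-1})|\cdot|V(w_{j+1}\cdots w_{2n})|$ after first evaluating the counts of the auxiliary four-block words $A^{p}\bar A^{q}A^{r}\bar A^{s}$, but the bijective argument seems cleanest to present.
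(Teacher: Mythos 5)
Your proposal is correct, and it proves the same strengthened statement as the paper (that one may always take $m=1$, i.e.\ Theorem~\ref{thm:1}) using the very same word: your $W_a=A^a\bar A^aA^a\bar A^a$ is exactly the paper's $P_{a,a}$, so the construction at the heart of the argument coincides. Where you differ is in how the count $|V(P_{a,a})|=a+1$ is verified. The paper proves a reduction lemma for the two-parameter family $P_{k,\ell}$: if $k\neq\ell$ the pair $(p_k,p_{k+1})$ (or its analogue) is forced to be an edge and can be deleted, while if $k=\ell$ the vertex $p_k$ has exactly two feasible partners, giving the recursion $|V(P_{k,k})|=1+|V(P_{k,k-1})|=1+|V(P_{k-1,k-1})|$ and hence $|V(P_{k,k})|=k+1$ by induction. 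You instead classify the valid matchings of the diagonal word directly, showing that the number $t$ of $B_1$--$B_4$ arcs is a bijection $V(W_a)\to\{0,1,\dots,a\}$; your forcing steps (i)--(iv) are sound (the crossing argument pins the $B_1$-endpoints to $1,\dots,t$, the no-trapping observation pins the $B_4$-endpoints and the nesting, and the residual word $\bar A^tA^a\bar A^{a-t}$ has a unique valid matching by the balance condition on the segment under each arc). The paper's recursion is shorter and handles the off-diagonal words as a bonus; your bijection is longer but yields an explicit description of all $a+1$ valid matchings, which the recursion does not. One small point in your favour: you treat $k=1$ separately via $1\in R(1,1)$, a case the paper's proof of Theorem~\ref{thm:1} leaves implicit since $|V(P_{k,k})|=k+1$ only produces the values $k+1\ge 2$.
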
 

We will show that slightly more is true. Our main result is the following:

\begin{theorem}\label{thm:1}
{\it For all} $k\in \mathbb{N}^{+}$ {\it there exists} $n$ {\it such that} $k\in R(n,1)$ .
\end{theorem}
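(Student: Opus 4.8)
The plan is to reduce at once to the case $m=1$ and then exhibit one explicit family of words. With a single complementary pair, a word $P$ of length $2n$ is simply a binary string in $\{A,\bar A\}$, and reading off the edge labels of a plane tree $T$ with $n$ edges along the counter-clockwise boundary walk amounts to choosing the noncrossing perfect matching of the $2n$ time-steps whose arcs are the two visits to each edge of $T$; then $T$ is $P$-valid precisely when every arc of that matching joins a step labelled $A$ to a step labelled $\bar A$. Hence $f(P):=|V(P)|$ is the number of noncrossing perfect matchings of $\{1,\dots,2n\}$ with no monochromatic arc. This quantity is invariant under reversing $P$ and under swapping $A\leftrightarrow\bar A$, and conditioning on the step $j$ to which step $1$ is matched gives the recursion
\[
f(p_1p_2\cdots p_{2n})=\sum_{\substack{j\ \mathrm{even}\\ p_j\neq p_1}} f(p_2\cdots p_{j-1})\,f(p_{j+1}\cdots p_{2n}),
\]
with $f$ of the empty word equal to $1$. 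All of the work will go through this recursion, and the only fact we need about it beyond the identity itself is the elementary observation that if step $1$ is matched to step $j$ then the steps $2,\dots,j-1$ must carry equally many $A$'s and $\bar A$'s.

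The family I would use is $P_1=A\bar A$ and, for $k\ge 2$,
\[
P_k=A^{\,k-1}\bar A^{\,k-1}A^{\,k-1}\bar A^{\,k-1},
\]
i.e.\ the ``square'' of the rigid block $A^{k-1}\bar A^{k-1}$; this word has $n=2(k-1)$ edges. Theorem~\ref{thm:1} follows once we show $f(P_k)=k$ for all $k\ge 1$, since then $k\in R(2(k-1),1)$ for $k\ge2$ and $1\in R(1,1)$.

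To prove $f(P_k)=k$ I would first record two ``rigidity'' lemmas, each immediate from the recursion and the balance observation: $f(A^m\bar A^m)=1$ for all $m\ge 0$, and more generally $f(\bar A^{\,b}A^{\,a}\bar A^{\,a-b})=1$ for $0\le b\le a$ (in each case the first step has only one admissible partner, after which both remaining pieces are again of this rigid shape). Next consider $W_c:=A^{\,c}\bar A^{\,a}A^{\,a}\bar A^{\,c}$ for $0\le c\le a$, so that $W_{k-1}=P_k$ when $a=k-1$. Writing out the run structure of $W_c$, the balance condition shows that the first step can be matched only to step $2c$ or to the last step $2(c+a)$: for any other even $\bar A$-step, the block between it and step $1$ has an unequal number of $A$'s and $\bar A$'s. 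Matching step $1$ to step $2c$ leaves $A^{\,c-1}\bar A^{\,c-1}$ inside and $\bar A^{\,a-c}A^{\,a}\bar A^{\,c}$ outside, both rigid, so this case contributes $1$; matching step $1$ to the last step leaves exactly $W_{c-1}$ inside and contributes $f(W_{c-1})$. Hence $f(W_c)=1+f(W_{c-1})$ for $1\le c\le a$, and since $W_0=A^a\bar A^a$ has $f=1$, induction on $c$ (with $a$ fixed) gives $f(W_c)=c+1$; taking $a=c=k-1$ yields $f(P_k)=k$.

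The only place any care is needed is the balance count pinning down the two admissible partners of step $1$ in $W_c$, together with the two rigidity lemmas, and all of these are one-line computations with run lengths; the real content of the argument is the guess that the square of a rigid block realizes every positive integer, $f\big((A^{k-1}\bar A^{k-1})^2\big)=k$. Before committing to the induction I would check the base cases directly, $f(A\bar A)=1$, $f(A\bar A A\bar A)=2$, $f(A^2\bar A^2A^2\bar A^2)=3$, and I would keep in mind the reversal/complement symmetry of $f$, which is what lets the two rigidity lemmas cover both the ``inside'' and ``outside'' pieces in the recursion step.
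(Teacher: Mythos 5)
Your proof is correct and follows essentially the same route as the paper: your witness word $P_k=A^{k-1}\bar{A}^{k-1}A^{k-1}\bar{A}^{k-1}$ is the paper's $P_{k-1,k-1}$, and your induction $f(W_c)=1+f(W_{c-1})$, obtained by showing a distinguished letter has exactly two admissible partners (one rigidifying both sides, one reducing the parameter), is the same mechanism as the paper's lemma giving $|V(P_{k,k})|=1+|V(P_{k,k-1})|=1+|V(P_{k-1,k-1})|$. The differences are cosmetic: you decompose at the first letter and absorb the paper's off-diagonal reduction step into your two rigidity lemmas.
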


Finally, we give a short application of a classical combinatorial puzzle about gas stations on a circular race track, to show that $R(n,m)\subseteq R(n+1,m)$ for all $n,m$.

\section{Proof of main results}

We will find it easier to work with {\it noncrossing matchings} instead of rooted plane trees. Everything that follows can be translated into the language of rooted plane trees if necessary. Rooted plane trees and noncrossing perfect matchings are two of many counting problems in combinatorics whose solution is given by Catalan numbers.
Given $2n$ labelled points on a circle, a \emph{noncrossing perfect matching} is a perfect matching of the points, such that when drawing the edges of the matching as straight line segments inside the circle no two edges cross each other.

The standard bijection between rooted labelled plane trees and noncrossing perfect matching goes as follows. First, write the numbers $1,2,\ldots, 2n$ on equidistant points on the unit circle $x^2+y^2=1$, starting with the number $1$ at position $(0,1)$ and traversing the circle counterclockwise.
Given a rooted plane tree $T$ with $n$ edges, write the numbers $1, 2. \ldots, 2n$ on the edges according to a counter-clockwise walk around the boundary of $T$, starting at the root vertex, each edge receiving two numbers (of different parity). The pairs of integers corresponding to the same edge of $T$ will be the edges of our matching $M$. 

Recall that given a word $P\in \mathcal{P}(n, m)$, a $P$-valid plane tree $T$ is such that, when wrapping $P$ around $T$, each edge receives complementary letters. Hence, when translated into the language of matchings, a $P$-valid noncrossing perfect matching is such that, if we write the letters of $P$ in order along a circle, each edge in the matching receives complementary letters as its endpoints.

First we present our counterexample to Conjecture~\ref{conj:1}. Let $\mathcal{A} := \{A, \bar{A}, B, \bar{B}\}$, where the pairs $(A,\bar{A})$ and $(B,\bar{B})$ represent the complementary letters. We claim that the word $P = B\bar{B}\bar{A}AA\bar{A}\bar{A}AB\bar{B}A\bar{A}A\bar{A}$ has $|V(P)| = 11$. Indeed, there are two ways to match up the {\it B}s with the $\bar{B}$s. If we match the adjacent $B\bar{B}$ pairs we get 7 possible matchings, if we match the further away $B\bar{B}$ pairs we get $2\cdot 2=4$ possible matchings. But according to Figure 9 in~\cite{black}, we have $11\not\in R(7,1)$.
\begin{figure}[h]
    \centering
    \includegraphics[width=65.70mm,height=61.30mm]{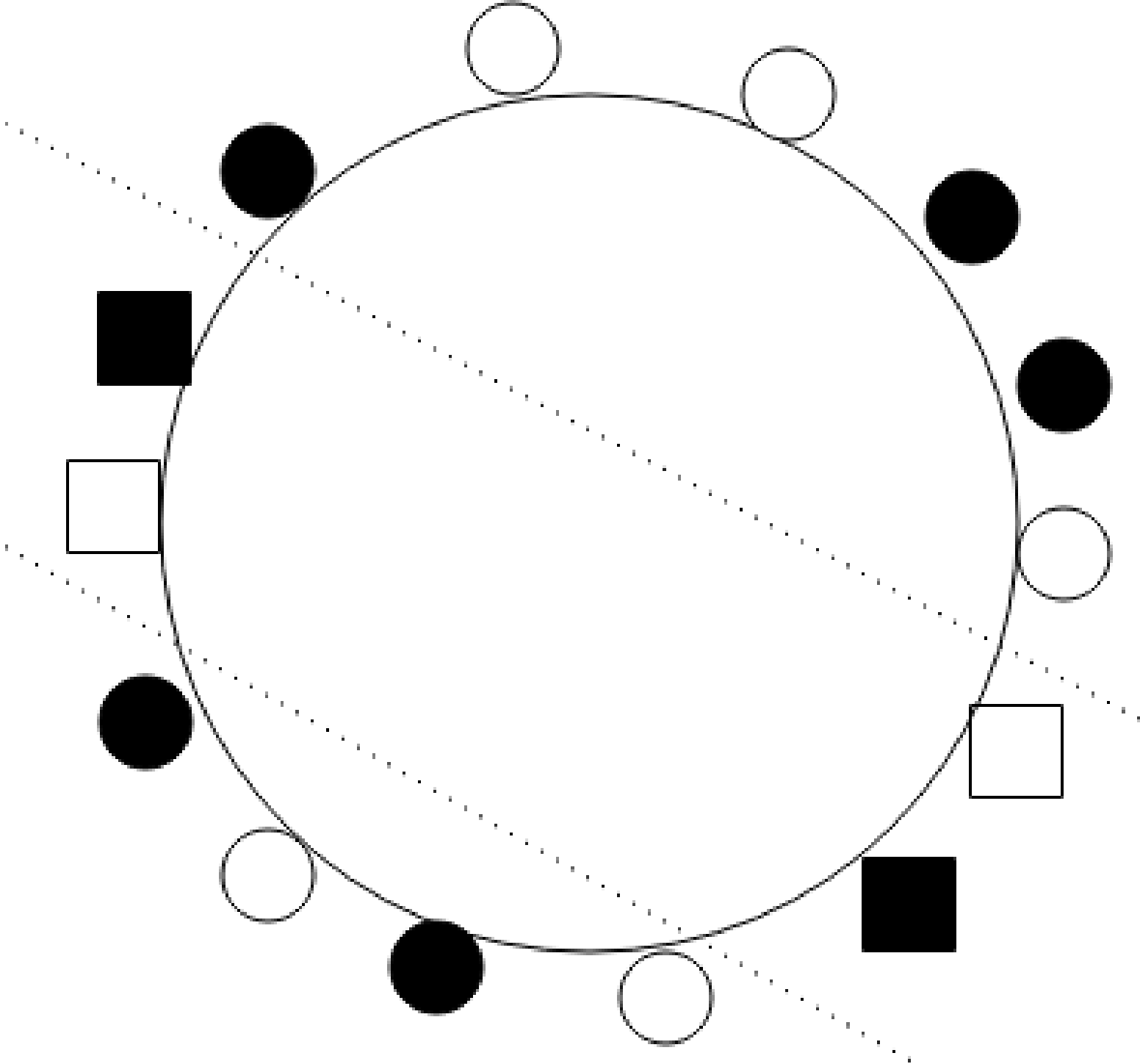}
    \caption{The counterexample to Conjecture~\ref{conj:1}. Circles are $A$s, squares are $B$s, black means complement (i.e. $\bar{A}$ or $\bar{B}$)}
    \label{fig:my_label}
\end{figure}

Now we turn our attention to proving Theorem~\ref{thm:1}. Define the word $P_{k,\ell}$ for each $k,\ell\in\mathbb{N}^+$ as follows.
$$ P_{k,\ell}=\overbrace{AAA\ldots A}^{k\text{ times}} ~  \overbrace{\bar{A}\bar{A}\bar{A}\ldots \bar{A}}^{k\text{ times}} ~ \overbrace{AAA\ldots A}^{\ell\text{ times}} ~  \overbrace{\bar{A}\bar{A}\bar{A}\ldots \bar{A}}^{\ell\text{ times}} = p_{1}p_{2}\ldots p_{2k+2l}.$$

Note that $|V(P_{1,1})| =2.$

\begin{lemma}
 {\it For} $k,  l\geq 1$ {\it we have}

$|V(P_{k,l})|= \left\{\begin{array}{ll}
|V(P_{k-1,l})| & if\ k>l\\
|V(P_{k,l-1})| & if\ l>k\\
1+|V(P_{k,l-1})| & if\ k=l
\end{array}\right.$
\end{lemma}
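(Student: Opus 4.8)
The plan is to pass to the matching picture described in the paper: place $2n=2k+2\ell$ points on a circle carrying the letters of $P_{k,\ell}$ in order, and write $X$ for the block of $k$ letters $A$ in positions $1,\dots,k$, then $\bar X$ for the block of $k$ letters $\bar A$ in positions $k+1,\dots,2k$, then $Y$ for the $\ell$ letters $A$ in positions $2k+1,\dots,2k+\ell$, and $\bar Y$ for the last $\ell$ letters $\bar A$. The whole argument rests on one structural observation together with two elementary tools: (a) if a chord of a $P_{k,\ell}$-valid noncrossing matching cuts the circle into two arcs, then each arc with its endpoints removed must contain equally many $A$'s and $\bar A$'s, and its interior points are matched among themselves; and (b) two adjacent blocks consisting of $m$ letters $A$ followed by $m$ letters $\bar A$ (or $m$ letters $\bar A$ followed by $m$ letters $A$) admit exactly one valid noncrossing matching, the nested one.

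The structural claim is: in any $P_{k,\ell}$-valid noncrossing matching, the last point of $X$ (position $k$) is matched either to the first point of $\bar X$ (position $k+1$), or to position $k+2\ell+1$, the latter being possible only when $\ell\ge k$. Indeed, the partner of position $k$ is a $\bar A$, hence lies in $\bar X$ or in $\bar Y$. If it lies in $\bar X$ but is not position $k+1$, the short arc between the two endpoints consists only of $\bar A$'s, contradicting (a). If it lies in $\bar Y$, counting letters on the short arc via (a) forces the partner to be position $k+2\ell+1$, which is a legal position inside $\bar Y$ exactly when $\ell\ge k$ (and the complementary arc is then automatically balanced). Combining this with the left–right reflection that simultaneously swaps every letter with its complement — which carries $P_{k,\ell}$ to $P_{\ell,k}$ and valid matchings to valid matchings, so in particular $|V(P_{k,\ell})|=|V(P_{\ell,k})|$ — we also get the mirror statement for the last point of $Y$ (position $2k+\ell$): it is matched to position $2k+\ell+1$ in $\bar Y$, or, only when $k\ge\ell$, to position $k+1$ in $\bar X$.

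Now the three cases follow. By the reflection symmetry it suffices to treat $k\ge\ell$. If $k>\ell$ the structural claim forces the edge $\{k,k+1\}$ in every valid matching; deleting these two adjacent points and closing the circle is the standard bijection between such matchings and valid matchings of the word obtained by erasing this $A\bar A$ pair, namely $P_{k-1,\ell}$, so $|V(P_{k,\ell})|=|V(P_{k-1,\ell})|$. If $k=\ell$, apply the mirror statement to position $2k+\ell$: either it is matched to position $2k+\ell+1$, and the same deletion bijection identifies these matchings with $V(P_{k,\ell-1})=V(P_{k,k-1})$; or it is matched to position $k+1$, and then the chord joining position $k+1$ to position $2k+\ell$ together with tool (b) applied to each of the two arcs it cuts off forces the entire matching, so there is exactly one such matching. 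Hence $|V(P_{k,k})|=1+|V(P_{k,k-1})|$, and the case $\ell>k$ is the case $k>\ell$ read through the reflection symmetry.

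The main obstacle is the structural claim: making the "letter-count across a chord" bookkeeping airtight — in particular verifying that both arcs cut off by the long chord to position $k+2\ell+1$ are balanced, so that this really is the only alternative to the edge $\{k,k+1\}$, and that in the case $k=\ell$ the long-chord alternative forces a unique completion rather than several. The remaining ingredients (uniqueness of the nested matching between two adjacent monochromatic blocks, the deletion bijection for an adjacent complementary pair, and the reflection symmetry) are routine.
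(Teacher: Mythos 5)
Your proof is correct and follows essentially the same route as the paper: the balance condition across a chord forces position $k$ to be matched to $k+1$ unless $\ell\ge k$, in which case the long chord to position $k+2\ell+1$ is the only alternative and forces a unique nested completion of both arcs, and deleting an adjacent matched pair yields the recursion. (One harmless slip: the general ``mirror statement'' should allow position $2k+\ell$ to be matched to position $2k-\ell+1$ rather than $k+1$ when $k\ge\ell$; the two agree precisely in the case $k=\ell$, which is the only place you invoke it, so the argument is unaffected.)
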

\begin{proof}
Suppose first that $k>l$. Suppose we match $p_{k}$ with $p_{i}$. What is the value of $i$? We cannot have $i<k$, as then $p_{i}$ and $p_{k}$ are not complementary letters. But we also cannot have $i>k+1$, because we need to have the same number of {\it A}s as $\bar{A}$s among $\{p_{k+1}\ldots p_{i-1}\}$ to get a noncrossing matching. So $i=k+1$. Hence $(p_{k}p_{k+1})$ is an edge of the matching and we may remove these two adjacent vertices to conclude the claim. The case $k<l$ is analogous to the above.
\begin{figure}[h]
    \centering
    \includegraphics[width=77.55mm,height=74.85mm]{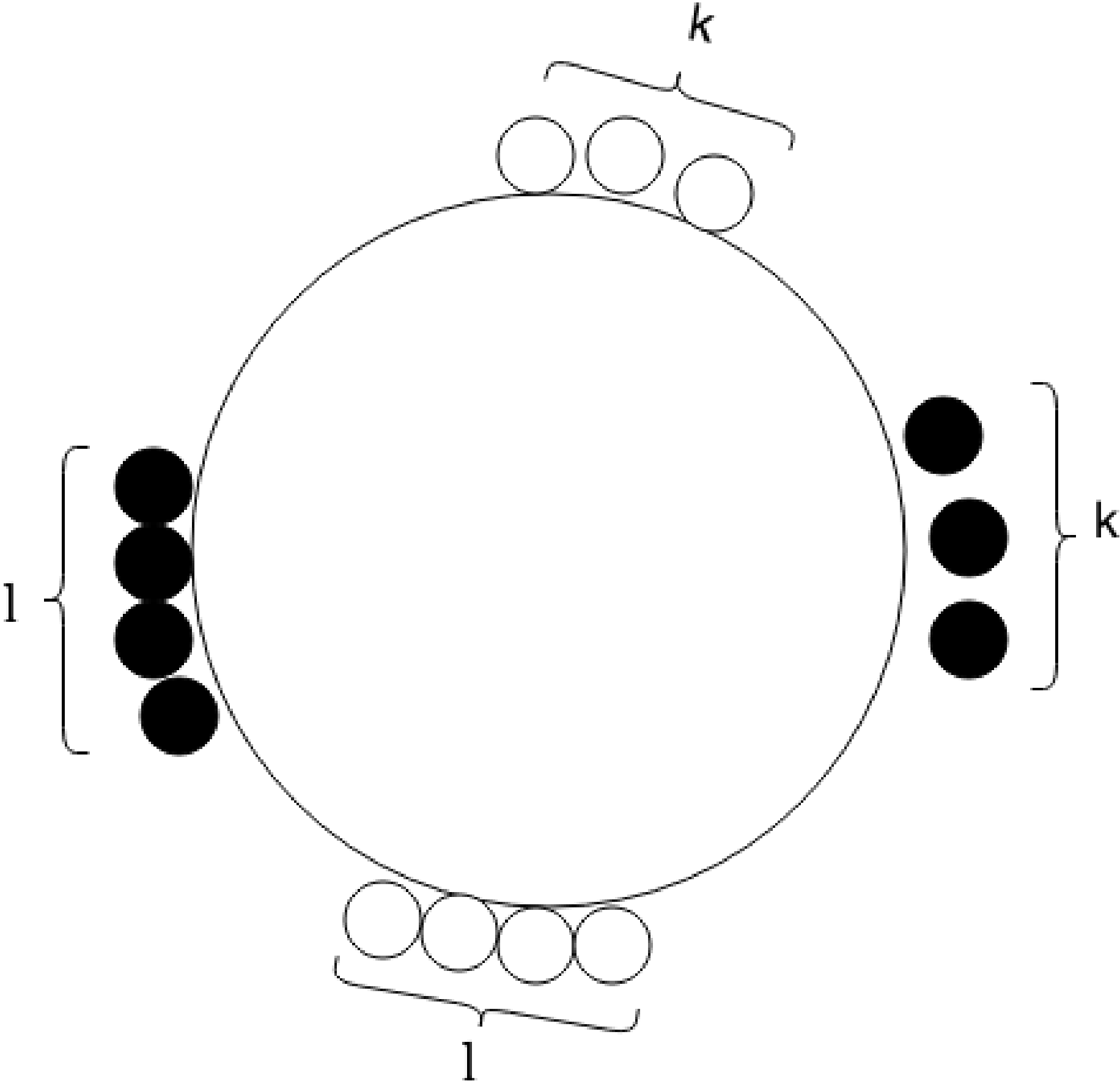}
    \caption{The word $P_{k,\ell}$ on a circle}
    \label{fig:my_label}
\end{figure}

Suppose now $k = l$. By the same logic as above, we either have $(p_{k}p_{k+1})$ as an edge (leaving us with $|V(P_{k,k-1})|$ possible matchings) or we have $(p_{k}p_{3k+1})$ as an edge, whence both sides of our edge have a unique noncrossing matching, as required.
\end{proof}

\begin{proof}[Proof of Theorem~\ref{thm:1}] Let $k \in \mathbb{N}^{+}$. Note that $|V(P_{k,k})| = 1 + |V(P_{k,k-1})| = 1 + |V(P_{k-1,k-1})|$, hence by induction we have $|V(P_{k,k})| = k+1$. So $k+1 \in R(2k, 1)$ for all $k.$ 
\end{proof}

We finish with an application of the following classical puzzle.
\begin{theorem}[\cite{boll, lovasz, winkler}\label{thm:gem}]
Along a speed track there are some gas stations. The total amount of gasoline available in them is equal to what our car (which has a very large tank) needs for going around the track. Then there is a gas station such that if we start there with empty tank, we shall be able to go around the track without running out of gasoline. 
\end{theorem}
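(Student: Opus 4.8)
The plan is to prove the puzzle by writing down an \emph{explicit} good starting station, namely the one at which a suitable running fuel balance attains its minimum. First I would fix notation: label the gas stations $1,2,\ldots,n$ in the cyclic order in which they occur along the track (so that station $n+1$ means station $1$), let $g_i\ge 0$ be the amount of fuel available at station $i$, and let $c_i\ge 0$ be the amount of fuel the car burns driving from station $i$ to the next station $i+1$. The hypothesis is then $\sum_{i=1}^{n} g_i=\sum_{i=1}^{n} c_i$. Since between consecutive stations the tank level only decreases, the fuel level over the whole circuit is minimised at a moment just before refuelling at some station; hence it suffices to control the tank level at those arrival moments, which reduces the continuous statement to a finite one.

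Next I would introduce the net gains $a_i:=g_i-c_i$ and the partial sums $S_0:=0$ and $S_j:=a_1+\cdots+a_j$ for $1\le j\le n$, noting $S_n=0$ by the hypothesis. The computation to carry out is that if the car starts at station $m+1$ with an empty tank and drives around, then upon arrival at the $j$-th subsequent station ($1\le j\le n$) its tank contains $S_{m+j}-S_m$ if $m+j\le n$, and $S_{m+j-n}-S_m$ if $m+j>n$, where the second case uses $S_n=0$ to splice the partial sum across the wrap-around point. Consequently the car completes the circuit starting from station $m+1$ if and only if $S_\ell\ge S_m$ for every $\ell\in\{1,\ldots,n\}$.

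The proof then finishes in one line: the finite set $\{S_1,\ldots,S_n\}$ attains a minimum, say at $\ell=m$ (and this minimum is $\le S_n=0$), and by the very choice of $m$ the inequality $S_\ell\ge S_m$ holds for all $\ell$, so starting the car at station $m+1$ works. The only step with any real room for error is the cyclic bookkeeping in the second paragraph: one has to split the subsequent stations into those reached before and those reached after passing station $n$, and verify that in each case the arrival level telescopes to the claimed form $S_\ell-S_m$ with $\ell\in\{1,\ldots,n\}$. (An alternative route is induction on $n$: since $\sum_i a_i=0$, some station has $g_i\ge c_i$; merge it with the following station into a single station with fuel $g_i+g_{i+1}$ and outgoing cost $c_i+c_{i+1}$, apply the inductive hypothesis to the resulting $(n-1)$-station track, and pull the good start back. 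I would nonetheless present the direct argument as the main proof, since it pins down a good station explicitly, which is the convenient form for the application to $R(n,m)\subseteq R(n+1,m)$.)
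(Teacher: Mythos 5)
Your argument is correct: the reduction to the arrival moments (since the tank level only decreases between stations), the telescoping of the arrival level to $S_\ell - S_m$ with the wrap-around handled via $S_n=0$, and the choice of $m$ as a minimiser of the partial sums together give a complete proof. Note, though, that the paper offers no proof of this theorem at all --- it is quoted as a classical puzzle with citations to Bollob\'as, Lov\'asz and Winkler --- and what you have written is precisely the standard minimum-partial-sum argument from those sources, so there is no in-paper proof to diverge from. Your closing remark is well taken: the fact that the argument pins down an explicit good station is exactly what the paper needs in the Proposition, where that station determines the position at which the string $\bar{A}A$ is inserted into $P$.
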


\begin{proposition}
 {\it For every} $n, m\in \mathbb{N}^{+}$ {\it we have} $R(n, m) \subseteq R(n+1, m)$ .
\end{proposition}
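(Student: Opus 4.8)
The plan is to turn any valid word $P=p_1\cdots p_{2n}$ with $|V(P)|=k$ into a valid word $P'$ of length $2n+2$ with $|V(P')|=k$; this gives $k\in R(n+1,m)$. I will obtain $P'$ by inserting one complementary adjacent pair into $P$, at a position chosen so that in every $P'$-valid noncrossing matching that pair is forced to be matched to itself — and Theorem~\ref{thm:gem} is exactly the tool that locates such a position.

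Fix any letter $\alpha$ of $\mathcal{A}$ and weight the letters by $w(\alpha)=+1$, $w(\bar\alpha)=-1$, and $w=0$ on all other letters. As $P$ is valid it contains equally many $\alpha$'s and $\bar\alpha$'s, so $w(p_1)+\cdots+w(p_{2n})=0$. Now read $P$ as a circular track with $2n$ gas stations, where station $i$ holds one unit of fuel if $p_i=\bar\alpha$ and none otherwise, and the leg leaving station $i$ burns one unit if $p_i=\alpha$ and none otherwise. Total fuel equals total consumption, so Theorem~\ref{thm:gem} yields a station from which the loop can be driven starting with an empty tank. Re-reading $P$ cyclically from that station as $q_1q_2\cdots q_{2n}$, the guarantee that the tank never runs dry says precisely that $\sum_{r\le t}w(q_r)\le 0$ for every $t\in\{0,1,\dots,2n\}$. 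Set $P':=\alpha\,\bar\alpha\,q_1q_2\cdots q_{2n}$, regarded as $2n+2$ circular points with the inserted $\alpha$ at position $1$ and the inserted $\bar\alpha$ at position $2$.

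The crux is to show that every $P'$-valid noncrossing matching uses the edge $\{1,2\}$. Point $1$ carries $\alpha$, so it is matched either to point $2$ or to some point $r+2$ with $q_r=\bar\alpha$. In the latter case the chord $\{1,r+2\}$ separates off the point set $\{2,3,\dots,r+1\}$, namely the inserted $\bar\alpha$ together with $q_1,\dots,q_{r-1}$; a noncrossing matching must match this set internally, hence it has equally many $\alpha$'s and $\bar\alpha$'s, which rearranges to $\sum_{s\le r-1}w(q_s)=1$ — impossible, since that partial sum is $\le 0$. So point $1$ is matched to point $2$. Deleting this forced edge gives a bijection between the $P'$-valid matchings and the $q_1\cdots q_{2n}$-valid matchings (its inverse adjoins the edge between the adjacent points $1$ and $2$, which crosses nothing and joins complementary letters), and $q_1\cdots q_{2n}$ has the same number of valid matchings as $P$ because rotating the circle is a symmetry. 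Hence $|V(P')|=|V(P)|=k$ and $k\in R(n+1,m)$.

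The step I expect to need the most care is the forcing claim: one has to see that sending point $1$ to a far-off $\bar\alpha$ is genuinely impossible — one side of the induced chord cannot be perfectly matched, for a counting reason — rather than merely suboptimal, and this impossibility is exactly what the gas-station choice of insertion point guarantees. The rest (the fuel/weight dictionary, rotation-invariance of $|V|$, the degenerate cases $r=1$ and $r=2n$, and that the deletion/insertion map respects noncrossing-ness and complementarity) should be routine.
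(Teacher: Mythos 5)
Your proof is correct and follows essentially the same route as the paper: insert an adjacent complementary pair at a position supplied by Theorem~\ref{thm:gem} so that the pair is forced to be matched to itself in every valid noncrossing matching, whence $|V(P')|=|V(P)|$. The only cosmetic differences are that you rotate the word and prepend the pair rather than inserting it mid-word, and that you spell out the balanced-count forcing argument that the paper only sketches.
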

\begin{proof}Pick some $k \in R(n,\ m)$ and let $P \in \mathcal{P}(n, m)$ be such that $|V(P)| = k$. Write $P =p_{1}p_{2}\ldots p_{2n}$. Pick any letter - say $A$, with complement $\bar{A}$ - from the complementary alphabet. We will construct $P'\in \mathcal{P}(n+1,\ m)$ from $P$ by inserting the string $A\bar{A}$ somewhere in $P$, so that $|V(P')| =k$. To achieve this, we want to make sure our two new letters are adjacent and have to get matched with each other in every noncrossing matching. We note that in any noncrossing matching, if $(p_{i}p_{j})$ is an edge with $i<j$, then the number of {\it A}s and number of $\bar{A}$s among $\{p_{i+1}\ldots p_{j-1}\}$ is equal. 

Denote the number of $A$s (and hence also the number of $\bar{A}$s) in $P$ by $a$. Consider a circular speed track which is $a$ miles long, and put $a$ gas stations around it in clockwise order, so that the distance between the $i$-th and $i+1$-th gas station (in miles) is equal to the number of $\bar{A}$s between the $i$-th and $i+1$-th $A$ in $P$. Note that the distance between the $a$-th and first gas station (in miles) is equal to the number of $\bar{A}$s occurring after the last $A$ or before the first $A$ of $P$. Assume each gas station has one liter of gasoline, and our car uses one liter per mile. According to Theorem~\ref{thm:gem} there exists a gas station where we can start with our car with an empty tank and go around the track clockwise without ever running out of gas. 

Say this gas station corresponded to $p_i$. Set $P':=p_1p_2\ldots p_{i-1}\bar{A}Ap_ip_{i+1}\ldots p_{2n}$. By the choice of $i$, in any perfect noncrossing matching our two new added letters have to be connected to each other, hence $|V(P')| = |V(P)| =k$.
\end{proof}

\textbf{Note:} This manuscript was originally written, but not published, in 2015. The paper~\cite{black} it is referencing has since been updated and published, see~\cite{black2}.

\end{document}